\theoremstyle{plain}
\newtheorem{Th}{Theorem}
\newtheorem{Le}{Lemma}
\DeclareMathOperator{\supp}{supp}
\DeclareMathOperator{\im}{img}
\newcommand{\vc}{\mathrm{vc}}
\newcommand{\co}{\delta}
\newcommand{\bd}{\partial}
\begin{document}

\title{Harmonic cohomology of symplectic fiber bundles}

\author{Oliver Ebner}

\address{Oliver Ebner,
         Institute of Geometry, TU Graz,
         Kopernikusgasse 24/IV, A-8010 Graz, Austria.}

\email{o.ebner@tugraz.at}

\author{Stefan Haller}

\address{Stefan Haller,
         Department of Mathematics, University of Vienna,
         Nordbergstra{\ss}e 15, A-1090, Vienna, Austria.}

\email{stefan.haller@univie.ac.at}

\thanks{The second author acknowledges the support of the Austrian Science Fund, grant P19392-N13.}
       
\keywords{Brylinksi problem; Poisson manifolds; harmonic cohomology}

\subjclass[2000]{53D17}

\begin{abstract}
We show that every de~Rham cohomology class on the total space of a symplectic fiber bundle with 
closed Lefschetz fibers, admits a Poisson harmonic representative in the sense of Brylinski. 
The proof is based on a new characterization of closed Lefschetz manifolds.
\end{abstract}

\maketitle

\section{Introduction and main result}\label{S:intro}

Suppose $P$ is a Poisson manifold \cite{Va} with Poisson tensor $\pi$. Let $d$ denote the de~Rham differential
on $\Omega(P)$ and write $i_\pi$ for the contraction with the Poisson tensor. Recall that Koszul's \cite{Ko} codifferential $\co:=[i_\pi,d]=i_\pi d-di_\pi$ 
satisfies $\delta^2=0$ and $[d,\delta]=d\co+\co d=0$. 
Differential forms $\alpha\in\Omega(P)$ with $d\alpha=0=\delta\alpha$ are called \emph{(Poisson) harmonic.}
Brylinski \cite{Br} asked for conditions on a Poisson manifold which imply that every de~Rham cohomology class
admits a harmonic representative.

In the symplectic case, this question has been settled by Mathieu. 
Recall that a symplectic manifold $(M,\omega)$ of dimension $2n$ is called \emph{Lefschetz} iff, for all $k$,
\begin{equation*}
[\omega]^k\wedge H^{n-k}(M;\mathbb R)=H^{n+k}(M;\mathbb R).
\end{equation*}
According to Mathieu \cite{Ma}, see \cite{Ya} for an alternative proof, a symplectic manifold is Lefschetz 
iff it satisfies the Brylinski conjecture, i.e.\ every de~Rham cohomology class of $M$ admits a harmonic representative.

In this paper we study the Brylinski problem for smooth symplectic fiber bundles \cite{Mc}.
Recall that the total space of a symplectic fiber bundle $P\to B$ is canonically equipped 
with the structure of a Poisson manifold obtained from the symplectic form on each fiber.
Locally, the Poisson structure on $P$ is product like, that is, every point in $B$ admits 
an open neighborhood $U$ such that there exists a fiber preserving Poisson diffeomorphism 
$P|_U\cong M\times U$. Here $M$ denotes the typical symplectic fiber, equipped with the 
corresponding Poisson structure, and $U$ is considered as a trivial Poisson manifold.
This renders the symplectic foliation of $P$ particularly nice, for its leaves coincide with 
the connected components of the fibers of the bundle $P\to B$.

The aim of this note is to establish the following result, providing a class of Poisson manifolds 
which satisfy the Brylinski conjecture.

\begin{Th}\label{Th:main}
Let $M$ be a closed symplectic Lefschetz manifold, and suppose $P\to B$ 
is a smooth symplectic fiber bundle with typical symplectic fiber $M$. 
Then every de~Rham cohomology class of $P$ admits a Poisson harmonic representative.
Moreover, the analogous statement for compactly supported cohomology holds true.
\end{Th}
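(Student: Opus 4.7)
The plan is to reduce the statement to the local product structure of a symplectic fiber bundle and then to invoke a fiberwise strengthening of Mathieu's theorem. First, I would handle the product case $P=M\times B$, with $B$ carrying the trivial Poisson structure. Here the Poisson bivector $\pi$ is pulled back from $M$, so $i_\pi$ annihilates pullbacks from $B$ and commutes with multiplication by them; a short computation then shows that the exterior product of a Poisson harmonic form on $M$ with a closed pullback from $B$ is again harmonic on $P$. Combining this observation with Mathieu's theorem on $M$ and the K\"unneth formula $H^*(M\times B;\mathbb R)\cong H^*(M;\mathbb R)\otimes H^*(B;\mathbb R)$, every de~Rham class on $P$ admits a harmonic representative in this product case.

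For a general bundle $P\to B$ I would choose a good cover $\{U_\alpha\}$ of $B$ such that each $P|_{U_\alpha}$ is Poisson isomorphic to $M\times U_\alpha$, and run a \v{C}ech--de~Rham / Mayer--Vietoris induction on the cardinality of the cover. A naive partition-of-unity patching of the locally harmonic representatives from the first step fails, because multiplication by a non-constant cutoff function destroys $\delta$-closedness. The inductive step reduces to the following problem: given a closed form on $P|_{U\cup V}$ which is already harmonic on each of $P|_U$ and $P|_V$, the obstruction to global harmonicity is represented on the overlap $P|_{U\cap V}$ by a form that is simultaneously $d$-exact and $\delta$-closed, and one must correct it away by a term which preserves both the $d$-cohomology class and $\delta$-closedness.

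This is precisely what a symplectic analog of the K\"ahler $dd^c$-lemma would furnish, and I expect this to be the content of the new characterization of closed Lefschetz manifolds advertised in the abstract: $M$ should be characterized (among closed symplectic manifolds) by the property that every $d$-exact, $\delta$-closed form on $M$ is $d\delta$-exact, or some suitable chain-level variant thereof, which is then applied fiberwise via the product case of the first step. The hard part will be isolating and proving this characterization and verifying that it remains valid on products $M\times U$, so as to drive the Mayer--Vietoris induction. Once that is in place, the compactly supported version follows by the same scheme, using the Mayer--Vietoris sequence for $H^*_c$ and the compactly supported K\"unneth formula.
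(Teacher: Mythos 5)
Your first step (the product case) is correct: pullbacks of Poisson harmonic forms on $M$ wedged with closed pullbacks from $B$ are indeed harmonic, and since $H(M)$ is finite dimensional the K\"unneth argument gives $H_0(M\times B)=H(M\times B)$. But the heart of your argument is the gluing step, and there you have a genuine gap rather than a proof. What your Mayer--Vietoris scheme needs on an overlap is a statement of the form: every $d$-exact, $\delta$-closed form on $P|_{U\cap V}\cong M\times(U\cap V)$ is $d\delta$-exact (or at least admits a $\delta$-closed primitive). You defer this as ``the hard part,'' but it is exactly the point of the whole theorem; it is \emph{not} the $d\delta$-lemma of Guillemin and Merkulov, which is a statement about \emph{closed} symplectic manifolds, and it is also not the characterization the paper actually proves (Theorem~\ref{Th:harext} is an extension statement on $M\times\mathbb R^p\times D^q$ relative to the boundary, resp.\ with vertically compact supports). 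Whether the na\"ive non-compact $d\delta$-statement holds on $M\times\mathbb R^n$ already requires a parametrized argument (smooth dependence of the primitives on the base variable), and nothing in your outline addresses it. Moreover, the induction as you set it up does not close on itself: at stage $k$ the relevant overlap is $(U_1\cup\dots\cup U_k)\cap U_{k+1}$, which is a union of contractible sets but not itself a product piece, so you would need the $d\delta$-type input on $M\times W$ for rather general open $W\subseteq B$, i.e.\ a double induction proving two different statements simultaneously. Finally, $B$ is not assumed compact, so there need not be a finite good cover, and an exhaustion/limit argument is missing; the compactly supported case inherits all of these issues.

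For comparison, the paper takes a different and more economical route: a proper Morse function gives a handle decomposition $\emptyset=B_0\subseteq B_1\subseteq\cdots$ of $B$, and one inductively makes a closed form harmonic on $P|_{B_{k+1}}$ given that it is harmonic near $P|_{B_k}$ (Lemma~\ref{Le:elbord}), using a symplectic connection to retract $P|_{B_{k+1}}$ into the region where harmonicity is already achieved. The crucial local input is Theorem~\ref{Th:harext}, an extension theorem on $M\times\mathbb R^p\times D^q$ formulated relative to the boundary (equivalently with vertically compact supports), whose proof invokes the $d\delta$-lemma only on the closed fiber $M$, after an $SO(q)$-averaging argument (Lemma~\ref{Le:MA}) brings the form into the normal shape $\sigma^*\beta_1+\sigma^*\beta_2\wedge\rho^*\theta$ near the boundary sphere and fiber integration (Lemma~\ref{Le:fibint}) identifies $\beta_2$ as an element of $\im(d)\cap\ker(\delta)$ on $M$. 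This careful relative formulation is precisely what avoids any $d\delta$-lemma on non-compact total spaces, which is the ingredient your proposal assumes but does not supply. If you want to salvage your approach, you would have to prove that non-compact $d\delta$-type statement on $M\times W$ for the opens $W$ occurring in your induction -- at which point you will likely find yourself reproving something very close to Theorem~\ref{Th:harext}.
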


This result, as well as a characterization of closed Lefschetz manifolds similar to Theorem~\ref{Th:harext} below, 
has been established in the first author's diploma thesis, employing sightly different methods than those of the 
present work, see \cite{Eb}.

The proof presented in Section~\ref{Se:proof} below is based on a handle body decomposition 
$\emptyset=B_0\subseteq B_1\subseteq B_2\subseteq\cdots$ of $B$.
Given a cohomology class of $P$, we will inductively produce representatives which are harmonic on $P|_{B_k}$, for increasing $k$.
The crucial problem, of course, is to extend harmonic forms across the handle, from $P|_{B_k}$ to $P|_{B_{k+1}}$.
This issue is addressed in Theorem~\ref{Th:harext}, see also Lemma~\ref{Le:elbord}.

\section{Extension of harmonic forms}\label{Sec:harext}

Let $M$ be a closed symplectic manifold and consider the trivial symplectic fiber bundle $P:=M\times\mathbb R^p\times D^q$
where $D^q$ denotes the $q$-dimensional closed unit ball. In other words, the Poisson structure on $P$ is the product 
structure obtained from the symplectic form on $M$ and the trivial Poisson structure on $\mathbb R^p\times D^q$. Note that the
boundary $\partial P=M\times\mathbb R^p\times\partial D^q$ is a Poisson submanifold. 
It turns out that the Lefschetz property of $M$ is equivalent to harmonic extendability of forms, from $\bd P$ to $P$.

To formulate this precisely, we need to introduce some notation which will be used throughout the rest of the paper.
For every Poisson manifold $P$ we let $Z(P):=\{\alpha\in\Omega(P)\mid d\alpha=0\}$ and 
$Z_0(P):=\{\alpha\in\Omega(P)\mid d\alpha=0=\delta\alpha\}$ denote the spaces of closed and harmonic differential forms, respectively.
Moreover, we write $H_0(P):=\ker(d)\cap\ker(\delta)/\im(d)\cap\ker(\delta)$ for the space of de~Rham cohomology 
classes which admit a harmonic representative, $H_0(P)\subseteq H(P)$. If $\iota:S\hookrightarrow P$ is a Poisson
submanifold, then the relative complex $\Omega(P,S):=\{\alpha\in\Omega(P)\mid\iota^*\alpha=0\}$ is invariant under $\delta$,
and we define the relative harmonic cohomology $H_0(P,S)\subseteq H(P,S)$ in an analogous manner.
Finally, if $Q$ is a Poisson manifold and $B$ is a smooth manifold we let $\Omega_\vc(Q\times B)$ denote the 
space of forms with vertically compact support (with respect to the projection $Q\times B\to Q$), and 
define the harmonic cohomology with vertically compact supports
$H_{\vc,0}(Q\times B)\subseteq H_\vc(Q\times B)$ in the obvious way.

Here is the main result that will be established in this section.

\begin{Th}\label{Th:harext}
Let $M$ be a closed symplectic manifold, suppose $p,q\in\mathbb N_0$, and consider the Poisson manifold
$P:=M\times\mathbb R^p\times D^q$. Then the following are equivalent:
\begin{enumerate}[(i)]
\item\label{Th:harext:i} 
$M$ is Lefschetz, i.e.\ $H_0(M)=H(M)$ according to \cite{Ma}.
\item\label{Th:harext:iii} 
$H_0(P,\partial P)=H(P,\partial P)$.
\item\label{Th:harext:v} 
$H_{\vc,0}(P\setminus\partial P)=H_\vc(P\setminus\partial P)$ with respect to the projection along $D^q\setminus\partial D^q$.
\item\label{Th:harext:ii} 
If $\alpha\in Z(P)$ is harmonic on a neighborhood of $\partial P$, then there exists $\beta\in\Omega(P)$, supported on $P\setminus\partial P$, so that $\alpha+d\beta$ is harmonic on $P$.
\item\label{Th:harext:iv} 
If $\alpha\in Z(P)$ and $\delta\iota^*\alpha=0$, then there exists $\beta\in\Omega(P)$ with $\iota^*\beta=0$, so that $\alpha+d\beta$ is harmonic on $P$.
Here $\iota:\partial P\hookrightarrow P$ denotes the canonical inclusion.
\end{enumerate}
\end{Th}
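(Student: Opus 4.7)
The plan is to exploit the product Poisson structure on $P = M \times \mathbb R^p \times D^q$: since $\mathbb R^p \times D^q$ carries the zero Poisson bivector, the Koszul codifferential $\delta_P$ acts purely in the $M$-factor. Under the Künneth decomposition $\Omega(P) \cong \Omega(M) \hat\otimes \Omega(\mathbb R^p \times D^q)$ one has $\delta_P = \delta_M \otimes \id$ and $d_P = d_M \otimes \id \pm \id \otimes d$, so a pure tensor $\mu \otimes \nu$ is harmonic on $P$ precisely when $\mu$ is harmonic on $M$ and $\nu$ is closed on the base. This observation drives all of the equivalences.

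First I would establish (i) $\Leftrightarrow$ (ii). The implication (ii) $\Rightarrow$ (i) is immediate upon specializing to $p = q = 0$: then $P = M$ and $\partial P = \emptyset$, so (ii) reduces verbatim to (i). For (i) $\Rightarrow$ (ii), I would invoke the Künneth theorem for the pair $(\mathbb R^p \times D^q, \mathbb R^p \times \partial D^q)$; every relative class of $P$ then decomposes as a finite sum $\sum_i [\mu_i] \otimes [\nu_i]$ with $[\mu_i] \in H(M)$ and $[\nu_i] \in H(\mathbb R^p \times D^q, \mathbb R^p \times \partial D^q)$. Mathieu's theorem applied to $M$ supplies harmonic representatives $\mu_i \in Z_0(M)$, and $\sum_i \mu_i \wedge \nu_i$ is then a relative harmonic representative on $P$.

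For (ii) $\Leftrightarrow$ (iii), I would use a cutoff argument in the $D^q$-direction: multiplying a relative form by a bump function with compact support in $D^q \setminus \partial D^q$ gives a cochain map $\Omega(P, \partial P) \to \Omega_{\vc}(P \setminus \partial P)$ which is a quasi-isomorphism on both the de Rham complex and the subcomplex of $\delta$-closed forms, the latter because $\delta_P$ ignores the $D^q$-coordinates and therefore commutes with multiplication by base-direction cutoffs. For (ii) $\Leftrightarrow$ (v) $\Leftrightarrow$ (iv), the implication (v) $\Rightarrow$ (ii) is direct: restrict to $\alpha \in \Omega(P, \partial P)$, where $\delta \iota^* \alpha = 0$ is automatic. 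Conversely, (ii) $\Rightarrow$ (v) proceeds by first extending $\iota^* \alpha$ harmonically across a collar of $\partial P$ using the product structure $\partial P = M \times \mathbb R^p \times \partial D^q$ together with the same Künneth plus Mathieu argument; since a closed form on the collar pulling back to zero on $\partial P$ is relatively exact (the collar deformation retracts to $\partial P$), one can then modify $\alpha$ by a relative exact form to make it harmonic near $\partial P$, reducing the problem to the closed-relative case (ii). The equivalence (iv) $\Leftrightarrow$ (v) is then purely local: the hypothesis of (iv) implies $\delta \iota^* \alpha = 0$, and the hypothesis of (v) can be upgraded to "harmonic near $\partial P$" by the same collar extension.

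The main technical obstacle is the collar-extension step just described: from the single boundary constraint $\delta \iota^* \alpha = 0$ on $\partial P$, one must produce a $d$-closed and $\delta$-closed form on an open neighborhood of $\partial P$ agreeing with $\alpha$ along $\partial P$. This is where the product structure on $\partial P$, combined with an inductive application of the Künneth plus Mathieu machinery, becomes essential, and is essentially the content of the auxiliary Lemma~\ref{Le:elbord} flagged in the introduction.
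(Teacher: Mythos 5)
Your outline of the equivalence of (i), (ii), (iii) is essentially the paper's (Lemma~\ref{Le:Liso}), but the passage from these cohomological statements to the extension statements (iv) and (v) --- which is the whole point of the theorem and the part actually used later in Lemma~\ref{Le:elbord} --- is not proved by your sketch. The collar step you describe (replacing $\alpha$ by $\alpha+d\gamma$ with $\iota^*\gamma=0$ so that the new form is radially constant, hence harmonic, near $\partial P$) is fine and is exactly the paper's easy reduction of (v) to (iv); but at that point invoking (ii) gets you nothing: (ii) is a statement about forms vanishing on $\partial P$, whereas you now need a correction supported in $P\setminus\partial P$ (note this is strictly stronger than $\iota^*\beta=0$, so your ``(iv)$\Leftrightarrow$(v) is purely local'' also fails in the direction (v)$\Rightarrow$(iv)) which makes $\alpha$ harmonic globally \emph{without touching it near the boundary}. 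This is a boundary-value problem, not a cohomological one, and ``K\"unneth plus Mathieu'' cannot solve it because Mathieu's theorem gives some harmonic representative of a class with no control of its behaviour near $\partial P$. The paper's proof of (iii)$\Rightarrow$(iv) shows where the real content lies: after an averaging argument (Lemmas~\ref{Le:inv} and~\ref{Le:MA}) one may assume $\alpha=\sigma^*\beta_1+\sigma^*\beta_2\wedge\rho^*\theta$ near $\partial P$ with $\beta_1,\beta_2\in Z_0(M)$; the term $\sigma^*\beta_2\wedge\rho^*\theta$ does not extend over the disc as a closed form of that shape, Stokes' theorem for fibre integration gives $\beta_2\in\im(d)\cap\ker(\delta)$, and one then needs $\beta_2=d\delta\gamma$ --- i.e.\ the $d\delta$-Lemma~\ref{Le:ddelta}, equivalent to the Lefschetz property --- to write down the explicit harmonic filling $\sigma^*\beta_1+d(\delta\sigma^*\gamma\wedge\lambda\rho^*\theta)$ agreeing with $\alpha$ near $\partial P$, after which (iii) finishes the job. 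Your proposal contains no substitute for this mechanism, and the ``main technical obstacle'' you flag is misattributed: Lemma~\ref{Le:elbord} is the Morse-theoretic step in the proof of Theorem~\ref{Th:main}, not the source of the collar extension.

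Two further, smaller problems. First, (ii)$\Rightarrow$(i) cannot be obtained by ``specializing to $p=q=0$'': the integers $p,q$ are fixed in the statement, so you must show that the relative statement for the given $P=M\times\mathbb R^p\times D^q$ forces $M$ to be Lefschetz. The paper gets both directions simultaneously from Lemma~\ref{Le:Liso}: wedging with a fibrewise compactly supported volume form and fibre integration commute with $\delta$ (Lemma~\ref{Le:fibint}), so the Thom isomorphism identifies $H_0^{*-q}(M)$ with $H_0^*(P,\partial P)$ and $H_{\vc,0}^*(P\setminus\partial P)$. Second, your comparison map $\Omega(P,\partial P)\to\Omega_\vc(P\setminus\partial P)$ given by multiplication with a bump function in the $D^q$-variable does not commute with $d$ and hence is not a chain map; the Thom-class construction just mentioned is the correct replacement. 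With these repairs the equivalence of (i), (ii), (iii) stands, but the implications towards (iv) and (v) still require the $d\delta$-lemma argument (or an equivalent device) that is absent from your proposal.
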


An essential ingredient for the proof of Theorem~\ref{Th:harext} is the following $d\delta$-Lemma.

\begin{Le}[$d\delta$-Lemma, \cite{Gu,Me}]\label{Le:ddelta}
A closed symplectic manifold is Lefschetz if and only if $\ker(\co)\cap\im(d)=\im(d\co)$.
\end{Le}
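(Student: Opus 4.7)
My strategy is to reduce both directions to Mathieu's theorem, which characterizes the Lefschetz property by $H_0(M)=H(M)$, and to exploit the symplectic $sl(2,\mathbb R)$-structure on $\Omega(M)$ generated by $L=\omega\wedge\cdot$, $\Lambda=i_\pi$, $H=[L,\Lambda]$, together with the fact that Koszul's codifferential satisfies $\delta=[\Lambda,d]$.

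For the ``if'' direction, assume $\ker(\delta)\cap\im(d)=\im(d\delta)$. For any closed form $\alpha$, the form $\delta\alpha$ lies in $\ker(d)\cap\im(\delta)$ (using $d\delta+\delta d=0$). A short zig-zag argument converts the hypothesis into the symmetric identity $\ker(d)\cap\im(\delta)=\im(d\delta)$: if $\delta\mu\in\ker(d)$ then $d\mu\in\ker(\delta)\cap\im(d)=\im(d\delta)$, so after correcting $\mu$ by a $\delta$-exact term one may assume $\mu$ closed, forcing $\delta\mu\in\im(d\delta)$. Applied to $\delta\alpha$, this yields $\mu$ with $\delta\alpha=-\delta d\mu$, so $\alpha+d\mu$ is harmonic. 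Hence $H_0(M)=H(M)$ and $M$ is Lefschetz by Mathieu.

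For the ``only if'' direction, assume $M$ is Lefschetz. Given $\alpha=d\beta$ with $\delta\alpha=0$, the aim is to exhibit $\gamma$ with $\alpha=d\delta\gamma$, that is, to modify $\beta$ by a coexact form so as to render it closed. The plan is to decompose $\beta=\sum_k L^k\beta_k$ via the primitive (Lefschetz) decomposition and, by Mathieu's theorem applied to each primitive cohomology class, to replace $\beta$ within its class by a representative whose primitive pieces are harmonic. The relation $\delta=[\Lambda,d]$ then rewrites the target equation as an algebraic system on primitive components, which can be inverted using the Lefschetz isomorphisms $L^k:H^{n-k}(M)\to H^{n+k}(M)$ (isomorphisms by Poincar\'e duality on the closed manifold $M$) together with the standard $sl(2)$-lowering action on primitive forms.

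The ``only if'' direction is the principal obstacle: $\delta$ does not respect the Lefschetz decomposition cleanly, producing cross-terms via $[\Lambda,d]$ that must be controlled inductively along the $sl(2)$-weight filtration. Finite-dimensionality of the isotypical decomposition and repeated application of Mathieu's theorem at each weight level are what let the successive corrections assemble into a single global potential $\gamma$.
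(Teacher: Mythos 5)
The paper gives no proof of this lemma at all: it quotes it from \cite{Gu,Me}, and the only direction it ever uses is ``Lefschetz $\Rightarrow \im(d)\cap\ker(\delta)=\im(d\delta)$'' (applied to $\beta_2$ in the proof of Theorem~\ref{Th:harext}). Measured on its own terms, your proposal has two genuine gaps. In the ``if'' direction, the zig-zag meant to convert the hypothesis $\ker(\delta)\cap\im(d)=\im(d\delta)$ into the mirror identity $\ker(d)\cap\im(\delta)=\im(d\delta)$ breaks at its last step: after replacing $\mu$ by the closed form $\mu-\delta\nu$, you assert that closedness of $\mu$ ``forces'' $\delta\mu\in\im(d\delta)$. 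That is precisely an instance of the mirror identity you are trying to establish, not a consequence of the hypothesis, and the implication is false for abstract anticommuting differentials with $d^2=\delta^2=0$: take $d=0$ and any nonzero $\delta$ with $\delta^2=0$, so that $\ker(\delta)\cap\im(d)=0=\im(d\delta)$ while $\ker(d)\cap\im(\delta)=\im(\delta)\neq0$. Some genuinely symplectic input is required here; the standard one is Brylinski's symplectic star operator, with $*\,*=\id$ and $\delta=\pm\,*d\,*$, which interchanges $\ker(d)\cap\im(\delta)$ with $\ker(\delta)\cap\im(d)$ and preserves $\im(d\delta)$. With that in hand, your computation ($\alpha$ closed $\Rightarrow \delta\alpha\in\ker(d)\cap\im(\delta)=\im(d\delta)$, hence $\alpha$ is cohomologous to a harmonic form) together with Mathieu's theorem does finish this direction.

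The ``only if'' direction --- the one the paper actually needs --- is announced rather than proved. The critical step, controlling the cross-terms produced by $\delta=[\Lambda,d]$ inductively along the $\mathfrak{sl}(2)$-weight filtration, is exactly the technical heart of the arguments of Guillemin and Merkulov, and nothing in your text carries it out: saying the successive corrections ``assemble into a single global potential $\gamma$'' restates the conclusion. To execute the plan one needs at least the refinement of Mathieu's theorem (due to Yan) that on a closed Lefschetz manifold harmonic representatives can be chosen compatibly with the Lefschetz decomposition, i.e.\ with harmonic primitive components, and then an explicit induction actually solving $d\beta=d\delta\gamma$ weight by weight; this step is known to be delicate and cannot be waved through. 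As it stands, neither direction is established, so the proposal cannot substitute for the citation to \cite{Gu,Me}; if a self-contained proof is wanted, it should follow one of those references (or Cavalcanti's later treatment) rather than this sketch.
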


We will also make use of the following averaging argument.

\begin{Le}\label{Le:inv}
Suppose $G$ is a connected compact Lie group acting on a Poisson manifold $P$ via Poisson diffeomorphisms, and let
$r:\Omega(P\times I)\to\Omega(P\times I)^G$, $r(\alpha):=\int_Gg^*\alpha\,dg$, 
denote the standard projection onto the space of $G$-invariant forms, $I:=[0,1]$.
Then there exists an operator $A:\Omega(P\times I)\to\Omega(P\times I)$, commuting with $d$, $i_\pi$ and $\delta$,
so that $A(\alpha)=\alpha$ in a neighborhood of $P\times\{1\}$ and $A(\alpha)=r(\alpha)$ in a neighborhood 
of $P\times\{0\}$, for all $\alpha\in\Omega(P\times I)$.
\end{Le}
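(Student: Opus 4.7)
The plan is to construct $A$ as a deformation of the identity of the form $A = \mathrm{id} + [d, K]$, for a suitable operator $K \colon \Omega(P \times I) \to \Omega(P \times I)$ of degree $-1$. Such an $A$ automatically commutes with $d$, since $[d, [d, K]] = 0$; the remaining task is to arrange that $A$ also commutes with $i_\pi$ (and hence with $\delta = [i_\pi, d]$), and that $A = r$ holds near $t = 0$ while $A = \mathrm{id}$ near $t = 1$.

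First I produce a chain homotopy $h \colon \Omega(P) \to \Omega(P)$ of degree $-1$ on $P$ itself, satisfying $[d, h] = r - \mathrm{id}$ together with $[i_\pi, h] = 0$. For a one-parameter subgroup $\phi_s = \exp(s\xi)$ generated by a Poisson vector field $\xi$, Cartan's homotopy formula yields $\phi_1^* - \mathrm{id} = [d, \int_0^1 \phi_s^* i_\xi \, ds]$, and the inner operator commutes with $i_\pi$: pullback by a Poisson diffeomorphism does, and the Schouten identity $[i_\xi, i_\pi] = i_{[\xi, \pi]}$ vanishes when $\xi$ preserves $\pi$. Averaging this one-parameter construction over $G$ with respect to Haar measure (parametrising $G$ by $\mathfrak{g}$ via the exponential map, which is surjective for compact connected $G$) produces the required $h$. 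From $[i_\pi, h] = 0$, combined with the fact that $r$ commutes with $i_\pi$, graded Jacobi moreover forces $[\delta, h] = 0$.

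Next, extend $h$ to $\tilde h := h \otimes \mathrm{id}_{\Omega(I)}$ on the completed tensor product $\Omega(P \times I) \cong \Omega(P) \mathbin{\hat\otimes} \Omega(I)$. Since the Poisson tensor on $P \times I$ has no $\partial_t$ component, both $d$ and $i_\pi$ respect the product decomposition, so the three identities $[d, \tilde h] = r - \mathrm{id}$, $[i_\pi, \tilde h] = 0$, $[\delta, \tilde h] = 0$ survive. Pick $\lambda \in C^\infty(I)$ with $\lambda \equiv 1$ near $0$ and $\lambda \equiv 0$ near $1$, pull it back to $P \times I$, and set
\[
K := \lambda \tilde h, \qquad A := \mathrm{id} + [d, K].
\]
At the endpoints $\lambda$ is locally constant, so $A = \mathrm{id} + [d, \tilde h] = r$ near $t = 0$ and $A = \mathrm{id}$ near $t = 1$. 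To verify $[i_\pi, A] = 0$ globally, graded Jacobi reduces the problem to $[\delta, K] + [d, [i_\pi, K]] = 0$: the second bracket vanishes since $i_\pi$ is $C^\infty$-linear and $[i_\pi, \tilde h] = 0$, while $[\delta, K] = \lambda [\delta, \tilde h] = 0$, after noting that $d\lambda$ lies in the $dt$-direction and is annihilated by the anchor $\pi^\sharp$.

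The main obstacle is producing the chain homotopy $h$ in the first step. For a circle action it falls out directly from Cartan's formula; for a general compact connected $G$ the averaging argument requires some care (one must handle the global topology of $G$, the non-injectivity of $\exp$, and the ordering of non-commuting one-parameter flows), but the essential compatibility $[i_\pi, h] = 0$ is forced throughout by the hypothesis that $G$ acts by Poisson diffeomorphisms, since then every building block — pullback by a group element, and contraction by an infinitesimal generator — already commutes with $i_\pi$.
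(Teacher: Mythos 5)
Your reduction to a chain homotopy is internally coherent: granting an operator $h\colon\Omega(P)\to\Omega(P)$ with $[d,h]=r-\mathrm{id}$ and $[i_\pi,h]=0$, the rest of your argument is correct — $[\delta,h]=0$ follows by graded Jacobi because $r$ commutes with $i_\pi$, multiplication by $\lambda(t)$ commutes with $\delta$ since $\pi^\sharp(dt)=0$, and $A=\mathrm{id}+[d,\lambda\tilde h]$ then commutes with $d$, $i_\pi$, $\delta$ and has the required endpoint behaviour. The genuine gap is exactly the step you yourself label the main obstacle: the construction of $h$ for a general connected compact $G$. "Parametrising $G$ by $\mathfrak g$ via $\exp$" is not available as stated, because $\exp$ admits no continuous section in general (already for $SU(2)$), so the assignment $g\mapsto\xi_g$, and hence $g\mapsto h_{\xi_g}$, cannot be chosen continuously; to make the average $h=\int_G h_{\xi_g}\,dg$ into a well-defined operator on smooth forms one must either integrate over a full-measure injectivity domain of $\exp$ (or use a bounded measurable section) and then justify smoothness of the output and differentiation under the integral sign, none of which you carry out. (Your worry about "ordering of non-commuting flows" is a red herring — surjectivity of $\exp$ means each $g$ is reached by a single one-parameter subgroup — and the identity you invoke is misquoted: $[i_\xi,i_\pi]=0$ holds for any vector field, the Schouten/Cartan identity being $[L_\xi,i_\pi]=i_{[\xi,\pi]}$; what actually uses the Poisson hypothesis is that $\exp(s\xi)^*$ commutes with $i_\pi$.)

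It is worth noting that the paper avoids this analytic/topological issue altogether and needs no homotopy operator: it covers $G$ by finitely many embedded closed balls $D_i$ with a subordinate partition of unity $\lambda_i$, chooses smooth contractions $h_i\colon D_i\times I\to G$ equal to $g$ near $t=0$ and to $e$ near $t=1$, and defines $A(\alpha)=\sum_i\int_{D_i}\lambda_i(g)\,\phi_{i,g}^*\alpha\,dg$ with $\phi_{i,g}(x,t)=(h_i(g,t)\cdot x,t)$. Since each $\phi_{i,g}$ is a Poisson diffeomorphism of $P\times I$, this $A$ commutes with $d$, $i_\pi$ and $\delta$ on the nose, and the interpolation between $r$ and $\mathrm{id}$ is built into the contractions rather than into a cutoff times a homotopy. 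If you want to keep your route, the cleanest repair is either this partition-of-unity device (which also yields your $h$ by Cartan's formula applied fiberwise to the contractions) or integration over the complement of the cut locus in $\mathfrak g$, where $\exp$ is a diffeomorphism onto a set of full Haar measure, together with a uniform-bound argument for differentiating under the integral.
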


\begin{proof} 
Choose finitely many smoothly embedded closed balls $D_i\subseteq G$ such that $\bigcup_i\mathring D_i=G$.
Let $\lambda_i$ denote a partition of unity on $G$ so that $\supp(\lambda_i)\subseteq D_i$.
Choose smooth contractions $h_i:D_i\times I\to G$ so that $h_i(g,t)=g$ for $t\leq1/3$
and $h_i(g,t)=e$ for $t\geq2/3$, $g\in D_i$. Here $e$ denotes the neutral element of $G$.
Using the maps 
\begin{equation*}
\phi_{i,g}:P\times I\to P\times I,\quad \phi_{i,g}(x,t):=(h_i(g,t)\cdot x,t),\qquad g\in D_i,
\end{equation*}
we define the operator $A:\Omega(P\times I)\to\Omega(P\times I)$ by
\begin{equation*}
A(\alpha):=\sum_i\int_{D_i}\lambda_i(g)\phi_{i,g}^*\alpha\,\,dg
\end{equation*}
where integration is with respect to the invariant Haar measure of $G$.
It is straightforward to verify that $A$ has the desired properties, the relations $[A,i_\pi]=0=[A,\delta]$
follow from the fact that each $\phi_{i,g}$ is a Poisson map. 
\end{proof}

The following application of Lemma~\ref{Le:inv} will be used in the proof of Theorem~\ref{Th:main}.

\begin{Le}\label{Le:MA}
Let $M$ be a symplectic manifold and consider the Poisson manifold $P:=M\times\mathbb R^p\times A^q$ where 
$A^q:=\{\xi\in\mathbb R^q\mid\frac12\leq\xi\leq1\}$ denotes the $q$-dimensional annulus.
Moreover, suppose $\alpha\in\Omega(P)$ is harmonic on a neighborhood of $\partial_+P:=M\times\mathbb R^p\times\partial D^q$. 
Then there exist $\beta\in\Omega(P)$, supported on $P\setminus\partial_+P$, and $\beta_1,\beta_2\in Z_0(M)$, so that
$\tilde\alpha:=\alpha+d\beta$ is harmonic on $P$, and $\tilde\alpha=\sigma^*\beta_1+\sigma^*\beta_2\wedge\rho^*\theta$
in a neighborhood of $\partial_-P:=M\times\mathbb R^p\times\frac12\partial D^q$. Here 
$\sigma:P\to M$ and $\rho:P\to\partial D^q$ denote the canonical projections, and $\theta$ denotes the standard 
volume form on $\partial D^q$.\footnote{To be specific, in the case $q=1$ we assume $\theta(-1)=-1/2$ and $\theta(1)=1/2$, so that 
$\int_{\partial D^q}\theta=1$ with respect to orientation on $\partial D^q$ induced from the standard orientation of $D^q$.}
\end{Le}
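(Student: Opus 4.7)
The plan is to modify $\alpha$ in three successive steps, each adding an exact form supported away from $\partial_+P$, so as to achieve both global harmonicity and the prescribed shape near $\partial_-P$. I tacitly assume $\alpha$ is closed on $P$, since otherwise $\tilde\alpha=\alpha+d\beta$ could not be harmonic. The key structural fact is that $\delta=\delta_M$ on the product Poisson manifold $P$, so pullbacks along maps acting only in the $\mathbb R^p\times A^q$ directions automatically commute with $\delta$.

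Step~1 is a radial collapse onto $\partial_+P$. Let $\pi:P\to\partial_+P$, $(m,x,\xi)\mapsto(m,x,\xi/|\xi|)$, be the radial retraction and $\iota:\partial_+P\hookrightarrow P$ the inclusion. Choosing a smooth homotopy from $\pi\circ\iota$ to $\id_P$ that is stationary on a neighborhood of $\partial_+P$, the resulting Cartan chain homotopy $K$ yields $\alpha=\pi^*\iota^*\alpha+dK\alpha$ with $K\alpha$ vanishing near $\partial_+P$. Setting $\alpha^{(1)}:=\pi^*\iota^*\alpha$, this form is closed, radially independent on all of $P$, and globally harmonic: $\iota^*\alpha$ is harmonic on the Poisson submanifold $\partial_+P$, and both $\iota^*$ and $\pi^*$ commute with $\delta=\delta_M$. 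Step~2, valid for $q\ge 2$, averages over $SO(q)$ near $\partial_-P$ via Lemma~\ref{Le:inv} with $G=SO(q)$ acting by rotations on $\partial D^q$, with $P'=M\times\mathbb R^p\times\partial D^q$, and with the interval $I$ identified with the radial coordinate $[1/2,1]$. The operator $A$ then commutes with $d,\delta$, equals the identity near $\partial_+P$, and equals $SO(q)$-averaging near $\partial_-P$. Because $SO(q)$ is connected and each $\phi_{i,g}$ from the construction of $A$ equals the identity near $\partial_+P$, a Cartan-formula argument applied fibrewise over $g\in D_i$ produces $\eta$ with $A(\alpha^{(1)})-\alpha^{(1)}=d\eta$ and $\eta$ supported away from $\partial_+P$. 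Setting $\alpha^{(2)}:=A(\alpha^{(1)})$, this form is closed, still harmonic (since $\delta A(\alpha^{(1)})=A\delta\alpha^{(1)}=0$), and near $\partial_-P$ is both $SO(q)$-invariant and radially independent, the latter inherited from $\alpha^{(1)}$; for $q=1$ this step is skipped.

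In Step~3 I remove the $\mathbb R^p$-dependence near $\partial_-P$. Let $\hat R:P\to P$, $(m,x,\xi)\mapsto(m,0,\xi)$, be the retraction onto $M\times\{0\}\times A^q$, with the standard chain homotopy $K''$ satisfying $dK''+K''d=\id-\hat R^*$. Since $\hat R$ acts only on the $\mathbb R^p$ factor, $K''$ commutes with $\delta=\delta_M$. Picking a cutoff $\chi:P\to[0,1]$ equal to $1$ on a neighborhood of $\partial_-P$ and to $0$ on a neighborhood of $\partial_+P$, I set $\tilde\alpha:=\alpha^{(2)}-d(\chi K''\alpha^{(2)})$. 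This is closed, equals $\alpha^{(2)}$ near $\partial_+P$, and equals $\hat R^*\alpha^{(2)}$ near $\partial_-P$, where it is $\mathbb R^p$-independent. Harmonicity holds globally since $\delta(\chi K''\alpha^{(2)})=\chi K''\delta\alpha^{(2)}=0$. Near $\partial_-P$, $\tilde\alpha$ is therefore radially independent, $\mathbb R^p$-independent, and $SO(q)$-invariant; for $q\ge 2$ such forms on $P$ are precisely $\sigma^*\beta_1+\sigma^*\beta_2\wedge\rho^*\theta$ with $\beta_i\in\Omega(M)$, and harmonicity forces $\beta_1,\beta_2\in Z_0(M)$. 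For $q=1$ the analogous decomposition follows by writing $\tilde\alpha|_{\xi=\pm 1/2}$ in the basis $\{1,\theta\}$ of $\Omega(\partial D^1)$.

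The main technical obstacle will be arranging the chain-homotopy operators in each step to vanish where needed, so that the cumulative primitive $\beta$ is supported in $P\setminus\partial_+P$, while verifying that each modification preserves the actual harmonicity rather than only the cohomology class. The product structure $\delta=\delta_M$ is what makes this feasible, since all of the operators acting on the $\mathbb R^p\times A^q$ directions commute with $\delta$ for free.
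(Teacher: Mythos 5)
Your Steps 2 and 3 are essentially the paper's own argument (the $SO(q)$-averaging via Lemma~\ref{Le:inv} and the identification of invariant forms on $M\times\partial D^q$ with $\beta_1+\beta_2\wedge\theta$ is exactly what the paper does, and the normalization in the $\mathbb R^p$- and radial directions is what the paper hides in its ``w.l.o.g.''), but Step~1 as stated fails, and this is a genuine gap. The maps $\iota\circ\pi$ and $\id_P$ agree only on $\partial_+P$ itself, so there is no homotopy between them that is stationary on a \emph{neighborhood} of $\partial_+P$; accordingly the Cartan primitive $K\alpha$ vanishes on $\partial_+P$ but not, in general, near it. More seriously, $\alpha^{(1)}=\pi^*\iota^*\alpha$ differs from $\alpha$ on every neighborhood of $\partial_+P$, whereas the lemma forces $\tilde\alpha=\alpha$ on a neighborhood of $\partial_+P$, since $d\beta$ is supported in $P\setminus\partial_+P$. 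A concrete failure: with $u:=|\xi|$ take $\alpha:=g(u)\,du$ with $g$ not vanishing near $u=1$. This form is harmonic on all of $P$ (it is closed and killed by $i_\pi$ for degree reasons), yet $\iota^*\alpha=0$, so your construction returns $\tilde\alpha=0$; then $\tilde\alpha-\alpha=-g(u)\,du$ does not vanish near $\partial_+P$ and hence is not $d\beta$ for any $\beta$ supported on $P\setminus\partial_+P$.

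The repair is the standard one, and it turns your proof into the paper's: do not collapse onto $\partial_+P$, but onto a collar of $\partial_+P$ contained in the region where $\alpha$ is harmonic. Concretely, use $R(m,x,\xi):=\bigl(m,\lambda(|\xi|)x,\psi(|\xi|)\,\xi/|\xi|\bigr)$ with $\psi(s)=s$ for $s$ near $1$, $\psi$ taking values in the collar, and $\lambda=1$ near $s=1$, $\lambda=0$ near $s=\tfrac12$. Then $R$ is the identity near $\partial_+P$, acts only in the $\mathbb R^p\times A^q$ directions, so $R^*$ commutes with $i_\pi$ and $\delta$ and $R^*\alpha$ is harmonic on all of $P$; $R$ is homotopic to $\id_P$ through maps of the same type which are stationary near $\partial_+P$, so for closed $\alpha$ the Cartan primitive is genuinely supported in $P\setminus\partial_+P$; and near $\partial_-P$ the form $R^*\alpha$ is already radially constant and $\mathbb R^p$-independent, so your Step~3 becomes superfluous. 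After this replacement, applying Lemma~\ref{Le:inv} as in your Step~2 and decomposing the resulting $SO(q)$-invariant harmonic form on $M\times\partial D^q$ (with the separate elementary treatment of $q=1$) completes the proof exactly as in the paper.
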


\begin{proof}
W.l.o.g.\ we may assume $\alpha\in Z_0(P)$ and $\alpha=\tau^*\gamma$ in a neighborhood of $\partial_-P$ where 
$\gamma\in Z_0(M\times\partial D^q)$ and $\tau=(\sigma,\rho):P\to M\times\partial D^q$ denotes the canonical projection.
Applying the operator $A$ from Lemma~\ref{Le:inv} to $\alpha$, we obtain $\tilde\alpha\in Z_0(P)$
so that $\tilde\alpha=\alpha$ in a neighborhood of $\partial_+P$, and $\tilde\alpha=\tau^*\tilde\gamma$ in a neighborhood
of $\partial_-P$, where $\tilde\gamma\in Z_0(M\times\partial D^q)$ is $SO(q)$-invariant. We conclude that $\tilde\gamma$
is of the form $\tilde\gamma=\beta_1+\beta_2\wedge\theta$ with $\beta_1,\beta_2\in Z_0(M)$, whence 
$\tilde\alpha=\sigma^*\beta_1+\sigma^*\beta_2\wedge\rho^*\theta$ in a neighborhood of $\partial_-P$.
Clearly, there exists $\beta\in\Omega(P)$, supported on $P\setminus\partial_+P$, such that $\tilde\alpha-\alpha=d\beta$.
\end{proof}

\begin{Le}\label{Le:fibint}
Let $P$ be a Poisson manifold, and suppose $B$ is an oriented smooth manifold with boundary.
Then integration along the fibers $\int_B:\Omega_\vc(P\times B)\to\Omega(P)$ commutes with $i_\pi$ and $\delta$. 
\end{Le}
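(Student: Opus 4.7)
My plan is to exploit the fact that the product $P\times B$ carries a Poisson tensor that is purely horizontal with respect to the projection $\sigma\colon P\times B\to P$. In local product coordinates $(x^i,y^j)$, the lifted Poisson tensor reads $\tfrac12\pi^{ij}(x)\,\partial_{x^i}\wedge\partial_{x^j}$, involving neither $y$-derivatives nor $y$-dependent coefficients. Consequently $i_\pi$ acts only on $dx$-factors, with $x$-dependent coefficients, and commutes with every construction coming purely from the $B$-factor.

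The first step is to verify that $\int_B$ commutes with $i_\pi$. Only the piece of $\alpha\in\Omega_{\vc}(P\times B)$ of top vertical degree contributes to $\int_B\alpha$, and locally such a piece has the form $\beta\wedge dy^1\wedge\cdots\wedge dy^{\dim B}$ with $\beta$ involving only $x$, $dx$, and $y$. Since $i_\pi$ touches only the $dx$-factors of $\beta$ with coefficients independent of $y$, it passes through stripping off the top $dy$-form and integrating the resulting coefficient over $B$.

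The second step is to split $d=d_P+d_B$ into its horizontal and vertical parts in local product coordinates. Because $\pi$ has no $\partial_{y^j}$ components, both the $dy^j\wedge$ and $\partial_{y^j}$ pieces of $d_B$ graded-commute with $i_\pi$, giving $[i_\pi,d_B]=0$ and hence $\delta=[i_\pi,d]=[i_\pi,d_P]$. Moreover $\int_B$ commutes with $d_P$: differentiation under the integral sign handles the $\partial_{x^i}$ factors, while each $dx^i\wedge$ is $\Omega(P)$-linear and therefore passes freely through fiber integration.

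Combining these observations yields $\int_B\delta\alpha=\int_B[i_\pi,d_P]\alpha=[i_\pi,d_P]\int_B\alpha=\delta\int_B\alpha$. The step worth flagging is that, in contrast to the full de~Rham differential, for which Stokes would introduce a $\partial B$-contribution whenever $\alpha$ does not vanish on $\partial B$, the codifferential $\delta$ only sees the horizontal component $d_P$ of $d$; this is precisely what makes commutativity with $\int_B$ survive in the presence of boundary. Beyond this conceptual point, the remaining work is sign bookkeeping in the horizontal/vertical bidegree of $\alpha$, which is entirely routine.
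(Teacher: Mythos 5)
Your proof is correct, but it takes a genuinely different route from the paper's. The paper never splits $d$: it works purely with operators, quoting the fiberwise Stokes formula $[\int_B,d]=\int_{\partial B}\iota^*$ and the graded Jacobi identity to get $[\int_B,\delta]=[[\int_B,i_\pi],d]+[i_\pi,[\int_B,d]]=[i_\pi,\int_{\partial B}\iota^*]=0$; there the boundary term does appear, but it is killed because restriction to $P\times\partial B$ followed by fiber integration commutes with $i_\pi$. You instead use the bidegree decomposition $d=d_P+d_B$, note $[i_\pi,d_B]=0$ so that $\delta=[i_\pi,d_P]$, and then check that $\int_B$ commutes with $i_\pi$ and with $d_P$ (differentiation under the integral sign; no Stokes term arises since $d_P$ involves no derivatives along the fiber). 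Both arguments rest on the same geometric fact --- the Poisson tensor of $P\times B$ is tangent to the $P$-factor --- but yours avoids the boundary contribution altogether rather than cancelling it, which is exactly the conceptual point you flag. Two small things would round it off: a one-line localization remark (a partition of unity on $B$ by functions of the fiber variable alone commutes with $i_\pi$ and $d_P$, so the coordinate computations suffice), and fixing the standard Bott--Tu convention for fiber integration, under which $\int_B i_\pi=i_\pi\int_B$ and $\int_B d_P=d_P\int_B$ hold with no signs at all, so the ``routine sign bookkeeping'' is in fact empty. The paper's proof is shorter and coordinate-free; yours is more elementary and makes transparent why $\partial B\neq\emptyset$ causes no trouble, namely because $\delta$ only sees the horizontal part of $d$.
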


\begin{proof}
The relation $i_\pi\int_B\alpha=\int_Bi_\pi\alpha$ is obvious. Combining this with Stokes' theorem, that is
$[\int_B,d]=\int_{\partial B}\iota^*$, we obtain
\begin{equation*}
\textstyle
[\int_B,\co]=[\int_B,[i_\pi,d]]=[[\int_B,i_\pi],d]+[i_\pi,[\int_B,d]]=[i_\pi,\int_{\partial B}\iota^*]=0.
\end{equation*}
Here $\iota:P\times \partial B\hookrightarrow P\times B$ denotes the canonical inclusion.
\end{proof}

\begin{Le}\label{Le:Liso}
Suppose $Q$ is a Poisson manifold, and consider the Poisson manifold $P:=Q\times D^q$.
Then the Thom (K\"unneth) isomorphism restricts to an isomorphism of harmonic cohomology, i.e.\
$H_0^{*-q}(Q)=H^*_{\vc,0}(P\setminus\partial P)=H_0^*(P,\partial P)$.
\end{Le}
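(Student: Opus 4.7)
The plan is to realize both isomorphisms of the lemma by a single Thom map together with its fiber-integration inverse, both of which intertwine Koszul's codifferential; the classical de~Rham Thom isomorphism then upgrades essentially for free to harmonic cohomology. The main ingredient is a closed $q$-form $\Phi\in\Omega^q(D^q)$ with $\supp(\Phi)\subseteq D^q\setminus\partial D^q$ and $\int_{D^q}\Phi=1$. Using $\Phi$ I would define the Thom map
\[
 T\colon\Omega^{*-q}(Q)\to\Omega^*(P),\qquad T(\alpha):=\pi_Q^*\alpha\wedge\pi_{D^q}^*\Phi,
\]
where $\pi_Q$ and $\pi_{D^q}$ are the canonical projections. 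Since $\Phi$ vanishes near $\partial D^q$, the image of $T$ lies inside both $\Omega_{\vc}(P\setminus\partial P)$ and $\Omega(P,\partial P)$. Commutativity with $d$ is immediate from $d\Phi=0$; for $\delta$, one notes that the Poisson tensor of $P$ is tangent to the $Q$-factor and therefore annihilates every form pulled back from $D^q$, and a short direct calculation then yields $\delta T(\alpha)=T(\delta_Q\alpha)$. Consequently $T$ descends to well-defined maps on harmonic cohomology.

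In the opposite direction, I would use fiber integration $S:=\int_{D^q}\colon\Omega^*(P)\to\Omega^{*-q}(Q)$. By Lemma~\ref{Le:fibint} the operator $S$ commutes with $\delta$, and by Stokes' theorem the obstruction $[S,d]=\int_{\partial D^q}\iota^*$ vanishes on both $\Omega_{\vc}(P\setminus\partial P)$ and $\Omega(P,\partial P)$, so $S$ is a chain map intertwining both $d$ and $\delta$ in these settings. Fubini yields $S\circ T=\id_{\Omega(Q)}$. This identity alone gives injectivity of $T_*$ on harmonic cohomology: if $T(\alpha)=d\eta$ for some $\eta$ in the ambient subcomplex, then $\alpha=ST(\alpha)=d(S\eta)$, and since $\alpha\in\ker(\delta)$ this exhibits $[\alpha]=0$ in $H_0(Q)$.

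For surjectivity, given $[\omega]$ in $H^*_{\vc,0}(P\setminus\partial P)$ or in $H_0^*(P,\partial P)$ I set $\alpha:=S(\omega)$, which is harmonic on $Q$ by the chain-map property of $S$. The classical Thom isomorphism for the trivial disk bundle $Q\times D^q$ says $T_*$ is already a bijection on ordinary de~Rham cohomology, hence $T_*S_*=\id$ there, so $\omega-T(\alpha)=d\eta$ for some $\eta$ in the ambient subcomplex. Because both $\omega$ and $T(\alpha)$ are $\delta$-closed, their difference lies in $\im(d)\cap\ker(\delta)$, which means $[T(\alpha)]=[\omega]$ already in harmonic cohomology. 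The identification $H^*_{\vc,0}(P\setminus\partial P)=H_0^*(P,\partial P)$ then follows by naturality under the inclusion $\Omega_{\vc}(P\setminus\partial P)\hookrightarrow\Omega(P,\partial P)$. The only real calculation is the verification of $\delta T=T\delta_Q$, which rests entirely on the product structure of the Poisson tensor; everything else is standard Thom-isomorphism bookkeeping combined with Lemma~\ref{Le:fibint}.
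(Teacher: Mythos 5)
Your proposal is correct and takes essentially the same route as the paper: wedging with a closed, compactly supported fiber volume form of total integral one to realize the Thom map, checking it intertwines $\delta$ via the product structure of the Poisson tensor, and using fiber integration over $D^q$ --- which commutes with $\delta$ by Lemma~\ref{Le:fibint} and with $d$ on the relative and vertically compactly supported complexes --- as the inverse. Your explicit injectivity and surjectivity bookkeeping merely spells out what the paper's shorter argument leaves implicit.
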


\begin{proof}
Choose $\eta\in\Omega^q(D^q)$, supported on $D^q\setminus\partial D^q$, such that $\int_{D^q}\eta=1$. 
Clearly, the chain map $\Omega(Q)\to\Omega_\vc(P\setminus\partial P)\subseteq\Omega(P,\partial P)$, 
$\alpha\mapsto\alpha\wedge\eta$, commute with $\delta$.
This map induces the Thom isomorphism which therefore preserve harmonicity. Its inverse is induced by
integration along the fibers $\int_{D^q}:\Omega(P,\partial P)\to\Omega(Q)$, and this commutes with $\delta$ too, see Lemma~\ref{Le:fibint}.
\end{proof}

Now the table is served and we proceed to the

\begin{proof}[Proof of Theorem~\ref{Th:harext}]
Set $Q:=M\times\mathbb R^p$ and note that the isomorphism $H(Q)=H(M)$ induced by the canonical projection
restricts to an isomorphism of harmonic cohomology $H_0(Q)=H_0(M)$.
The equivalence of the first three statements thus follows from Lemma~\ref{Le:Liso}.
Let us continue by showing that (\ref{Th:harext:v}) implies (\ref{Th:harext:ii}).
Assume $\alpha\in Z(P)$ is harmonic on a neighborhood of $\partial P$.
Let $\rho:P\setminus(M\times\mathbb R^p\times\{0\})\to\partial D^q$ and $\sigma:P\to M$ denote the canonical projections.
In view of Lemma~\ref{Le:MA}, we may w.l.o.g.\ assume $\alpha=\sigma^*\beta_1+\sigma^*\beta_2\wedge\rho^*\theta$
in a neighborhood of $\partial P$
where $\beta_1,\beta_2\in Z_0(M)$ and $\theta$ denotes the standard volume form on $\partial D^q$.
Using Stokes' theorem for integration along the fiber of $M\times D^q\to M$, we obtain
\begin{equation*}
\beta_2=\int_{\partial D^q}j^*\alpha=-d\int_{D^q}j^*\alpha\in\im(d)\cap\ker(\co)
\end{equation*}
where $j:M\times D^q\to M\times\{0\}\times D^q\subseteq P$ denotes the canonical inclusion.
By the $d\delta$-Lemma~\ref{Le:ddelta}, we thus have $\beta_2=d\co\gamma$ for some differential form $\gamma$ on $M$. 
Let $\lambda$ be a smooth function on $P$, identically $1$ in a neighborhood of $\partial P$, identically $0$
near $M\times\mathbb R^p\times\{0\}$, and constant in the $M$-direction.
Then $\tilde\alpha:=\sigma^*\beta_1+d(\co\sigma^*\gamma\wedge\lambda\rho^*\theta)$ is a harmonic on $P$, and $\alpha-\tilde\alpha=0$ in a neighborhood of $\partial P$.
Hence, using (\ref{Th:harext:v}), we find $\beta\in\Omega(P)$, supported on $P\setminus\partial P$, so that $\alpha-\tilde\alpha+d\beta$ 
is harmonic on $P$. Thus, $\beta$ has the desired property.
Let us next show that (\ref{Th:harext:ii}) implies (\ref{Th:harext:iv}). Suppose $\alpha\in Z(P)$ and $\delta\iota^*\alpha=0$.
Clearly, there exists $\beta_1\in\Omega(P)$, with $\iota^*\beta_1=0$, so that $\tilde\alpha:=\alpha+d\beta_1$ satisfies 
$r^*\tilde\alpha=\tilde\alpha$ near $\partial P$, where $r:P\setminus(M\times\mathbb R^p\times\{0\})\to\partial P$
denotes the canonical radial retraction. Particularly, $\tilde\alpha$ is harmonic on a neighborhood of $\partial P$. According to (\ref{Th:harext:ii})
there exists $\beta_2\in\Omega(P)$, supported on $P\setminus\partial P$, so that $\tilde\alpha+d\beta_2$ is harmonic on $P$. The form $\beta:=\beta_1+\beta_2$
thus has the desired property.
Obviously, (\ref{Th:harext:iv}) implies (\ref{Th:harext:iii}).
\end{proof}

\section{Proof of Theorem~\ref{Th:main}}\label{Se:proof}

Choose a proper Morse function $f$ on $B$, bounded from below, so 
that the preimage of each critical value consists of a single critical point \cite{MiM}.
We label the critical values in increasing order 
$c_0<c_1<\cdots$, and choose regular values $r_k$ so that $c_{k-1}<r_k<c_k$.
By construction, the sublevel sets $B_k:=\{f(x)\leq r_k\}$ provide an increasing filtration of $B$
by compact submanifolds with boundary, $\emptyset=B_0\subseteq B_1\subseteq B_2\subseteq\cdots$. 
The statement in Theorem~\ref{Th:main} is an immediate consequence of the following

\begin{Le}\label{Le:elbord}
Suppose $\alpha\in Z(P)$ is a closed form which is harmonic on a neighborhood of $P|_{B_k}$.
Then there exists $\beta\in\Omega(P)$, supported on $P|_{B_{k+2}\setminus B_k}$, such that
$\alpha+d\beta$ is harmonic on a neighborhood of $P|_{B_{k+1}}$.
\end{Le}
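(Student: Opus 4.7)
The plan is to localize the problem to a standard neighborhood of the single critical point $x_0\in B_{k+1}\setminus B_k$ of index $\lambda$, and then to invoke Theorem~\ref{Th:harext}. Setting $q=\lambda$ and $p=\dim B-\lambda$, the aim is to realize a neighborhood of the attached handle as the model Poisson manifold $M\times\mathbb R^p\times D^q$ of that theorem, with its boundary sitting inside the region where $\alpha$ is already harmonic.

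The first step is geometric: using the gradient flow of $f$, one constructs a codimension-zero submanifold $N\subseteq\mathrm{int}(B_{k+2})$ diffeomorphic to $\mathbb R^p\times D^q$, arranged so that the boundary face $\partial N\cong\mathbb R^p\times\partial D^q$ lies inside the open set on which $\alpha$ is harmonic (a neighborhood of $P|_{B_k}$), while $B_k\cup\mathrm{int}(N)$ covers $B_{k+1}$. The slack $B_{k+2}\setminus B_{k+1}$ is essential here: it provides room to bend the ``free face'' of the attached handle back down into $B_k$ so that $\partial N$ can close up there. This geometric step is the main technical obstacle of the proof, since the attaching sphere of the handle naturally lies in $\partial B_k$ while the dual face lies in $\partial B_{k+1}$, and reshaping the handle neighborhood into the model $\mathbb R^p\times D^q$ with $\partial N\subset B_k$ requires careful use of the additional cobordism layer.

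Once $N$ has been constructed, the symplectic fiber bundle trivializes Poisson-isomorphically over the contractible set $N$, yielding $P|_N\cong M\times\mathbb R^p\times D^q$ as in the setup of Theorem~\ref{Th:harext}. Under this identification $\alpha|_{P|_N}$ is a closed form which is harmonic in a neighborhood of $\partial(P|_N)$, so Theorem~\ref{Th:harext}(\ref{Th:harext:ii}) supplies $\tilde\beta\in\Omega(P|_N)$, vanishing near $\partial(P|_N)$, such that $\alpha|_{P|_N}+d\tilde\beta$ is harmonic on $P|_N$. Extending $\tilde\beta$ by zero across $\partial N$ produces a smooth form $\beta\in\Omega(P)$ supported in $P|_{B_{k+2}\setminus B_k}$: on $P|_{B_k}$ it vanishes, so $\alpha+d\beta=\alpha$ is still harmonic there; on $P|_N$ it is harmonic by construction; together these give harmonicity on a neighborhood of $P|_{B_{k+1}}$. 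A secondary technical point is that $\tilde\beta$ must have compact support in the $\mathbb R^p$-direction so that the zero-extension is smooth and properly supported in $B_{k+2}$; this can be arranged either by inserting cutoffs into the proof of Theorem~\ref{Th:harext}, or by exploiting that $\alpha$ is already harmonic on the ``ends'' of $N$ in the $\mathbb R^p$-direction, where $N$ lies inside $B_k$.
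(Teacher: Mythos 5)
Your first half (trivialize $P$ over a standard handle neighborhood, apply Theorem~\ref{Th:harext}(\ref{Th:harext:ii}) there, extend the correction by zero) is exactly the paper's first step. The gap is in your main geometric step: you require an embedded codimension-zero $N\cong\mathbb R^p\times D^q$ with $\partial N$ in a (possibly very thin) neighborhood of $B_k$ and $B_k\cup\mathrm{int}(N)\supseteq B_{k+1}$. Since $\partial B_{k+1}$ is disjoint from $B_k$, this forces $\partial B_{k+1}\subseteq\mathrm{int}(N)$, and $\mathrm{int}(N)\cong\mathbb R^{\dim B}$ is an embedded contractible open set, so every loop it contains is null-homotopic in $B$. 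This is impossible in general: take $B=T^2$ with a standard Morse function and the stage where the first $1$-handle is attached, so that the two circles of $\partial B_{k+1}$ are non-contractible in $B$; no amount of slack in $B_{k+2}\setminus B_{k+1}$ helps, because the obstruction is homotopy-theoretic, not a matter of room. What is achievable (and is what Milnor provides) is a handle embedding $j:\mathbb R^p\times D^q\to B_{k+1}\setminus\mathring B_k$ with $j(\mathbb R^p\times\partial D^q)\subseteq\partial B_k$, but its interior together with $B_k$ covers only a neighborhood of $B_k\cup j(\{0\}\times D^q)$, not of $B_{k+1}$: the part of the cobordism flowing from $\partial B_k$ to $\partial B_{k+1}$ away from the critical point is left out, and that is precisely where your covering requirement breaks.

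The missing idea is how the paper bridges this remaining region. After the Theorem~\ref{Th:harext}(\ref{Th:harext:ii}) step one only knows that $\alpha$ is harmonic on $P|_U$ for a neighborhood $U$ of $\partial B_k\cup j(\{0\}\times D^q)$. One then chooses a vector field $X$ on $B$, supported in $B_{k+2}\setminus B_k$, whose flow $\varphi_t$ compresses $B_{k+1}$ into any prescribed neighborhood of $\partial B_k\cup j(\{0\}\times D^q)$ for large $t$, lifts $X$ horizontally with respect to a symplectic connection on $P$ to get $\tilde X$ with flow $\tilde\varphi_t$ consisting of Poisson maps, and picks $t_0$ with $\tilde\varphi_{t_0}(P|_{B_{k+1}})\subseteq P|_U$. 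Then $\tilde\varphi_{t_0}^*\alpha$ is harmonic over $B_{k+1}$ (pullback by a Poisson map preserves harmonicity), and the homotopy formula $\tilde\varphi_{t_0}^*\alpha-\alpha=d\beta$ with $\beta=\int_0^{t_0}\tilde\varphi_t^*i_{\tilde X}\alpha\,dt$ gives the required correction, automatically supported in $P|_{B_{k+2}\setminus B_k}$. So the cohomology class is not corrected by working in a chart that covers $B_{k+1}$; it is corrected by dragging $P|_{B_{k+1}}$ into the already-harmonic region by a Poisson isotopy of the total space. (Your ``secondary technical point'' about the behaviour of the correction at the ends of the $\mathbb R^p$-direction is a fair observation -- the paper treats it tersely -- but it is minor compared with replacing the impossible covering step by this compression argument.)
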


\begin{proof}
Let $q$ denote the Morse index of the unique critical point in $B_{k+1}\setminus B_k$, and set $p:=\dim B-q$. 
Recall \cite{MiM} that there exists an embedding $j:\mathbb R^p\times D^q\to B_{k+1}\setminus\mathring B_k$ so that 
$j(\mathbb R^p\times\partial D^q)=j(\mathbb R^p\times D^q)\cap\partial B_k$.
Moreover, there exists a vector field $X$ on $B$, supported on $B_{k+2}\setminus B_k$, so that its flow
$\varphi_t$ maps $B_{k+1}$ into any given neighborhood of $\partial B_k\cup j(\{0\}\times D^q)$, 
for sufficiently large $t$.

Trivializing the symplectic bundle $P$ over the image of $j$, we obtain an isomorphism of Poisson
manifolds $j^*P\cong M\times\mathbb R^p\times D^q$. Using Theorem~\ref{Th:harext}(\ref{Th:harext:ii}), we may thus assume that
there exists an open neighborhood $U$ of $\partial B_k\cup j(\{0\}\times D^q)$ so that $\alpha$ is harmonic on $P|_U$.
Let $\tilde X$ denote the horizontal lift of $X$ with respect to a symplectic connection \cite{Mc} on $P$,
and denote its flow at time $t$ by $\tilde\varphi_t$. Clearly, each $\tilde\varphi_t$ is a Poisson map.
Moreover, there exists $t_0$ so that $\tilde\varphi_{t_0}$ maps $P|_{B_{k+1}}$ into $P|_U$.
Thus, $\tilde\varphi_{t_0}^*\alpha$ is harmonic on $P|_{B_{k+1}}$.
Furthermore, $\tilde\varphi_{t_0}^*\alpha-\alpha=d\beta$ where $\beta:=\int_0^{t_0}\tilde\varphi_t^*i_{\tilde X}\alpha\,dt$
is supported on $P|_{B_{k+2}\setminus B_k}$.
\end{proof}

\end{document}